\DeclareMathOperator{\diag}{diag}
\DeclareMathOperator{\dist}{dist}
\DeclareMathOperator{\diam}{diam}
\def\u{\mbox{\boldmath $u$}}
\def\vecu{\mbox{\boldmath $u$}}
\def\vecv{\mbox{\boldmath $v$}}
\def\vecw{\mbox{\boldmath $w$}}
\def\vec0{\mbox{\boldmath $0$}}
\def\A{\mbox{\boldmath $A$}}
\def\A{\mbox{\boldmath $A$}}
\def\D{\mbox{\boldmath $D$}}
\def\I{\mbox{\boldmath $I$}}
\def\M{\mbox{\boldmath $M$}}
\def\V{\mbox{\boldmath $V$}}
\def\I{\mbox{\boldmath $I$}}
\def\M{\mbox{\boldmath $M$}}
\theoremstyle{plain}   
\newtheorem{theorem}{Theorem}[section]
\newtheorem{proposition}[theorem]{Proposition}
\newtheorem{corollary}[theorem]{Corollary}
\newtheorem{definition}[theorem]{Definition}
\title{Sequence mixed graphs}
\author{C. Dalf\'o$^a$, M. A. Fiol$^b$, N. L\'opez$^c$
\\ \\
{\small $^{a,b}$Dep. de Matem\`atiques, Universitat Polit\`ecnica de Catalunya} \\
{\small $^b$Barcelona Graduate School of Mathematics} \\
{\small 08034 Barcelona, Catalonia} \\
{\small {\tt\{cristina.dalfo,miguel.angel.fiol\}@upc.edu}} \\
{\small $^c$Dep. de Matem\`atica, Universitat de Lleida}\\
 {\small 25001 Lleida, Catalonia}\\
 {\small {\tt nlopez@matematica.udl.es}}
}
\date{}
\begin{document}

\maketitle

\begin{abstract}
A mixed graph can be seen as a type of digraph containing some edges (two opposite arcs). Here  we introduce the concept of sequence mixed graphs, which is a generalization of both sequence graphs and iterated line digraphs. These structures are proven to be useful  in the problem of constructing dense graphs or digraphs, and this is related to the degree/diameter problem. Thus, our generalized approach gives rise to graphs that have also good ratio order/diameter. Moreover, we propose a general method for obtaining a sequence mixed digraph by identifying some vertices of a certain iterated line digraph. As a consequence, some results about distance-related parameters (mainly, the diameter and the average distance) of sequence mixed graphs are presented.
\end{abstract}

\noindent{\em Mathematics Subject Classifications:} 05C35. \\
\noindent{\em Keywords:} Mixed graph, sequence graph, line digraph, degree/diameter problem, Moore bound, diameter, mean distance.

\section{Introduction}
Two techniques that have proved very useful to obtain large graphs and digraphs are, respectively, sequence graph and line digraph approaches. Sequence graphs were first proposed by  Fiol, Yebra, and F\`abrega \cite{fyf83}, whereas, within this context, line digraphs were studied by Fiol, Yebra, and Alegre \cite{FiYeAl83,FiYeAl84}. Mixed graphs can be seen as a generalization of both, undirected and directed graphs, see for instance the works by Nguyen and Miller \cite{nm08} and Buset, El Amiri, Erskine, Miller and P\'erez-Ros\'es \cite{baemp15}.
Here we introduce the concept of sequence mixed graphs, which is a generalization of both sequence graphs and iterated line digraphs. In particular, we show that a sequence mixed graph can be obtained by identifying some vertices of a certain iterated line digraph. This allows to apply known results of the latter to study some basic distance-related properties in the introduced structures. First, we begin with some standard definitions.

A {\em mixed} (or {\em partially directed\/}) graph $G$ with vertex set $V$ may contain (undirected) {\em edges} as well as directed edges (also known as {\em arcs}). From this point of view, a {\em graph} [resp. {\em directed graph} or {\em digraph}] has all its edges undirected [resp. directed]. In fact, we can identify the mixed graph $G$ with its associated digraph $G^*$ obtained by replacing all the edges by digons (two opposite arcs or a directed $2$-cycle). The {\em undirected degree} of a vertex $v$, denoted by $d(v)$ is the number of edges incident to $v$. The {\em out-degree} [resp. {\em in-degree}] of vertex $v$, denoted by $d^+(v)$ [resp. $d^-(v)$], is the number of arcs emanating from [resp. to] $v$.  If $d^+(v)=d^-(v)=z$ and $d(v)=r$, for all $v \in V$, then $G$ is said to be {\em totally regular\/} of degree $(r,z)$ (or simply {\em $(r,z)$-regular}).
Note that, in this case, the corresponding digraph $G^*$ is $(r+z)$-regular. A {\em walk\/} of length $\ell\geq 0$ from $u$ to $v$ is a sequence of $\ell+1$ vertices, $u_0u_1\dots u_{\ell-1}u_\ell$, such that $u=u_0$, $v=u_\ell$ and each pair $u_{i-1}u_i$, for $i=1,\ldots,\ell$, is either an edge or an arc of $G$. A {\em directed walk} is a walk containing only arcs. An {\em undirected walk} is a walk containing only edges. A walk whose vertices are all different is called a {\em path}.
The length of a shortest path from $u$ to $v$ is the {\it distance\/} from $u$ to $v$, and it is denoted by $\dist(u,v)$. Note that $\dist(u,v)$ may be different from $\dist(v,u)$, when shortest paths between $u$ and $v$ involve arcs. The maximum distance between any pair of vertices is the {\it diameter} $k$ of $G$, while the {\em average distance} between vertices of $G$ is defined as
$$
\overline{k}=\frac{1}{|V|^2}\sum_{u,v\in V}\dist(u,v).
$$
A {\em directed cycle} [resp. {\em undirected cycle}] of length $\ell$ is a walk of length $\ell$ from $u$ to $v$ involving only arcs [resp. edges] whose vertices are all different except $u=v$.
Finally, notice that the adjacency matrix $\A=(a_{uv})$ of a mixed graph $G$ coincides with the adjacency matrix of its associated digraph $G^*$, where $a_{uv}=1$ if there is an arc from $u$ to $v$, and $a_{uv}=0$ otherwise.

The following concepts, sequence graphs, line digraphs, and related results were studied in the works by Alegre, F\`abrega, Fiol, and Yebra \cite{fyf83,FiYeAl83,FiYeAl84}.

\begin{definition}
\label{def:seq-graphs}
Given a digraph $G$, each vertex of its {\em $\ell$-iterated line digraph} $L^{\ell}(G)$ represents a walk $u_0u_1\dots u_{\ell-1}u_\ell$ of length $\ell$ in $G$, and  vertex $u=u_0u_1\dots u_{\ell-1}u_{\ell}$ is adjacent to the vertices of the form $v=u_1u_2\dots u_{\ell}u_{\ell+1}$ with $(u_{\ell},u_{\ell+1})$ being an arc of $G$.
\end{definition}

By way of example, in Figure \ref{fig:S3G-L3G*}$(b)$ we represent the (symmetric) digraph $G^*$  and its  $3$-iterated line digraph $L^3(G^*)$.
The following result shows that the line digraph technique is useful to obtain dense digraphs.

\begin{theorem}\mbox{\rm \cite{FiYeAl83,FiYeAl84}}
\label{th:diam-line}
Let $G$ be a regular digraph different from a directed cycle, with diameter $k$ and average distance between vertices $\overline{k}$. Then, the diameter $k_{\ell}$ and average distance $\overline{k}_{\ell}$ of $L^{\ell}(G)$ satisfy
\begin{align}
k_{\ell} &= k+\ell, \label{(1)}\\
\overline{k}_{\ell} &< \overline{k}+\ell.
\end{align}
Moreover, if $G$ is nonregular, then \eqref{(1)} also holds.
\end{theorem}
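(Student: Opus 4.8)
The plan is to exploit the standard encoding of walks in $L^{\ell}(G)$ by walks in $G$: a walk $x^{(0)}\to x^{(1)}\to\dots\to x^{(m)}$ of length $m$ in $L^{\ell}(G)$ is recorded by a single walk $c_0c_1\dots c_{m+\ell}$ of $G$ via $x^{(t)}=c_tc_{t+1}\dots c_{t+\ell}$, and conversely. So, writing $x=x_0x_1\dots x_\ell$ and $y=y_0y_1\dots y_\ell$ for two vertices of $L^{\ell}(G)$, $\dist_{L^{\ell}(G)}(x,y)$ is the least $m$ for which there is a walk of $G$ of length $m+\ell$ whose first $\ell+1$ vertices are $x$ and whose last $\ell+1$ vertices are $y$. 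Separating the case $m\ge\ell$ (the two ``windows'' $x,y$ are disjoint and are joined by a $G$-walk from $x_\ell$ to $y_0$ of length $m-\ell$) from the case $m<\ell$ (the windows overlap, forcing $x_{m+i}=y_i$ for $0\le i\le\ell-m$), one obtains
$$
\dist_{L^{\ell}(G)}(x,y)=\min\Bigl(\{\,s:0\le s\le\ell-1,\ x_{s+i}=y_i\ \text{for}\ 0\le i\le\ell-s\,\}\cup\{\,\ell+\dist(x_\ell,y_0)\,\}\Bigr).
$$
In particular $\dist_{L^{\ell}(G)}(x,y)\le\ell+\dist(x_\ell,y_0)\le\ell+k$ for all $x,y$, whence $k_\ell\le k+\ell$; equality in the first bound fails only when $x,y$ admit an ``overlap'', i.e.\ an admissible shift $s\le\ell-1$.

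For the average distance I would average the bound $\dist_{L^{\ell}(G)}(x,y)\le\ell+\dist(x_\ell,y_0)$ over all ordered pairs. If $G$ is $d$-regular, the number of length-$\ell$ walks ending at a prescribed vertex and the number beginning at a prescribed vertex are both $d^{\ell}$ (induction on $\ell$, each step multiplying by the in-, resp. out-, degree $d$), and $|V(L^{\ell}(G))|=|V|\,d^{\ell}$; hence $\sum_{x,y}\dist(x_\ell,y_0)=d^{2\ell}\sum_{u,v\in V}\dist(u,v)=d^{2\ell}|V|^{2}\overline{k}$, and dividing by $|V(L^{\ell}(G))|^{2}$ gives $\overline{k}_\ell\le\overline{k}+\ell$. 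For $\ell\ge1$ this is strict, because every diagonal pair $x=y$ satisfies $\dist_{L^{\ell}(G)}(x,x)=0<\ell\le\ell+\dist(x_\ell,x_0)$ and so contributes a positive amount of slack to the sum.

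The heart of the proof is the matching lower bound $k_\ell\ge k+\ell$. Fix $u,v$ with $\dist(u,v)=k$ and fix any vertex $x$ of $L^{\ell}(G)$ ending at $u$ (one exists since $G$ is strongly connected). By the displayed formula, a vertex $y$ with $y_0=v$ has $\dist_{L^{\ell}(G)}(x,y)=\ell+k$ unless some shift $s\in\{0,\dots,\ell-1\}$ is admissible; such an $s$ requires $x_s=v$, and then the admissible $y$'s are exactly those whose first $\ell-s+1$ vertices equal the fixed walk $x_sx_{s+1}\dots x_\ell$, so there are at most $d^{s}$ of them (the remaining $s$ steps of $y$ being a walk out of $u$). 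Since the positions $s\le\ell-1$ with $x_s=v$ are distinct, the number of ``bad'' $y$'s is at most $1+d+\dots+d^{\ell-1}=(d^{\ell}-1)/(d-1)<d^{\ell}$, whereas there are $d^{\ell}$ vertices $y$ with $y_0=v$ in all. This strict inequality is exactly where the hypothesis ``$G$ is not a directed cycle'' enters (a connected $1$-regular digraph is a directed cycle): it guarantees a ``good'' $y$, for which $\dist_{L^{\ell}(G)}(x,y)=k+\ell$, and hence $k_\ell=k+\ell$.

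When $G$ is nonregular but has finite diameter it is strongly connected, hence automatically not a directed cycle, and the same scheme proves \eqref{(1)}: one repeats the counting of the previous paragraph with the appropriate (non-uniform) walk counts, or argues directly, using a vertex of out-degree at least $2$, that a length-$\ell$ walk into $u$ lying in $G-v$ can be constructed (which rules out every overlap). I expect this final point --- excluding all overlap shortcuts, and in particular making the nonregular bookkeeping rigorous --- to be the main obstacle; the rest is the routine algebra of the walk correspondence.
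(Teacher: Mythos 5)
The paper does not prove this theorem; it is quoted from the cited works of Fiol, Yebra and Alegre, so there is no internal proof to compare against. Judged on its own, your treatment of the regular case is correct and complete: the walk-correspondence formula for $\dist_{L^{\ell}(G)}(x,y)$ is right, the upper bounds for $k_\ell$ and $\overline{k}_\ell$ (with strictness coming from the diagonal pairs) are fine, and the counting of ``bad'' $y$'s, $1+d+\dots+d^{\ell-1}<d^{\ell}$, correctly isolates where the hypothesis $d\ge 2$ (equivalently, $G$ not a directed cycle) is used to force $k_\ell\ge k+\ell$.

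The genuine gap is the nonregular addendum, which you yourself flag but do not close, and whose two proposed repairs do not work as stated. The ``direct'' route --- constructing a length-$\ell$ walk into $u$ that avoids $v$ --- fails whenever $v$ is the unique in-neighbour of $u$: then every walk of length $\ge 1$ ending at $u$ passes through $v$ at position $\ell-1$, so overlaps cannot be excluded this way. The ``non-uniform counting'' route also does not obviously close: the bad $y$'s for each admissible shift are themselves walks starting at $v$, and if $v$ has small out-degree the total number of walks of length $\ell$ from $v$ can be as small as $1$, so the bound ``(number of bad $y$'s) $<$ (number of $y$'s with $y_0=v$)'' is no longer automatic and can a priori fail for a fixed $x$. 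The standard way to finish is different in structure: prove the case $\ell=1$ by a degree analysis (if for \emph{every} diametral pair $(b,c)$ the only in-arc of $b$ coincides with the only out-arc of $c$, one propagates the degree-one condition around a shortest $b$--$c$ path and concludes $G$ is a directed cycle, a contradiction), and then induct on $\ell$ via $L^{\ell}(G)=L(L^{\ell-1}(G))$, using that the line digraph of a strongly connected non-cycle is again a strongly connected non-cycle. Note that the nonregular case is not a throwaway: the paper later applies this theorem to $G^*$ for an arbitrary mixed graph $G$, which need not be regular.
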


\begin{definition}
\label{def:seq-graphs}
Given a graph $G$, the vertices of the {\em sequence graph} $S^{\ell}(G)$ of $G$ (also known as $\ell$-sequence graph) are all the walks $u_0u_1\dots u_{\ell-1}u_\ell$ of length $\ell$ of $G$. The edges are defined as follows: vertex $u_0u_1\dots u_{\ell-1}u_{\ell}$ is adjacent to $vu_0u_1\dots u_{\ell-1}$ and $u_0u_1\dots u_{\ell-1}w$ with $(v,u_0)$ and $(u_{\ell-1},w)$ being edges of $G$.
\end{definition}

\noindent Within this definition, we consider a walk $u_0u_1\dots u_{\ell
-1}u_{\ell}$ and its {\em conjugate} $u_{\ell}u_{\ell-1}\dots u_1u_0$ as the same sequence (or walk). Moreover, as we only consider simple graphs, self-adjacencies (or loops) are not taken into consideration. As an example, Figure \ref{fig:S3G-L3G*}$(a)$ shows the graph $G=K_{2,2}$  and its  $3$-sequence graph $S^3(G)$. In our context of construction of dense graphs, the main property of sequence graphs is the following:

\begin{theorem}\mbox{\rm \cite{fyf83}}
\label{th:diam}
Let $G$ be a graph of diameter $k$. Then, the diameter $k_{\ell}$ of $S^{\ell}(G)$ satisfies
$$
k_{\ell}\le k+\ell.
$$
\end{theorem}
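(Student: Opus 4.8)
The plan is to establish the bound $k_\ell \le k+\ell$ by producing, for any two vertices $\u = u_0u_1\dots u_\ell$ and $\vecv = v_0v_1\dots v_\ell$ of $S^\ell(G)$, a walk in $S^\ell(G)$ from $\u$ to $\vecv$ of length at most $k+\ell$. The intuition is that a vertex of $S^\ell(G)$ is a ``window'' of $\ell+1$ consecutive vertices sliding along a walk of $G$; an edge of $S^\ell(G)$ slides that window one step to the left (prepending a vertex) or one step to the right (appending a vertex). So the strategy is: first slide the window associated with $\u$ entirely off one of its endpoints and onto a single vertex that is well placed with respect to $\vecv$, then travel through $G$ to the relevant endpoint of $\vecv$, and finally slide the window out to reconstruct $\vecv$.

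Concretely, I would argue as follows. Since $\diam(G)=k$, pick a shortest path $w_0w_1\dots w_d$ in $G$ with $w_0=u_\ell$, $w_d=v_0$, and $d=\dist(u_\ell,v_0)\le k$. Now build the walk in $S^\ell(G)$ in three phases. \emph{Phase 1:} starting from $\u=u_0u_1\dots u_\ell$, append vertices $w_1,w_2,\dots$ one at a time; each append is a legal edge of $S^\ell(G)$ because $(w_{i-1},w_i)$ is an edge of $G$. After $\ell$ such steps the window is $w_{d'}w_{d'+1}\dots$ — actually I want to be careful: after $\ell$ appends starting from $u_0\dots u_\ell$ we reach $u_\ell w_1 w_2 \dots w_\ell$ if $d \ge \ell$, or we first exhaust the $w$-path. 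The cleanest bookkeeping is: first perform $d$ appends using $w_1,\dots,w_d$ (sliding the right end along the shortest path), arriving at a window whose \emph{last} coordinate is $v_0$; this uses $d\le k$ edges and the window now is $u_{d'}\dots u_\ell w_1\dots w_d$ for appropriate indices, still containing $v_0=w_d$ at its right end. \emph{Phase 2:} then perform $\ell$ further appends, this time prepend--no, append--using $v_1, v_2, \dots, v_\ell$ (legal since the $v_i$ form a walk in $G$); after these $\ell$ steps the window has slid completely past all old content and equals $v_0v_1\dots v_\ell=\vecv$. Total length $d+\ell\le k+\ell$, as required. I should double-check the edge-count in Phase 1 so that exactly $\ell$ coordinates of $\vecv$ remain to be appended in Phase 2, adjusting by using fewer appends in Phase 1 and using the prepend-moves symmetrically if $d<\ell$; either way the two phases together cost $d+\ell$.

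The main obstacle — and the only subtlety — is the careful index bookkeeping showing that after Phase 1 the sliding window is positioned so that exactly the $\ell$ appends of $v_1,\dots,v_\ell$ in Phase 2 reconstruct $\vecv$, i.e.\ that the window at the end of Phase 1 is $v_0$ padded on the left by $\ell$ leftover symbols, and that $v_0$ is in the correct (right-most) slot. When $d\ge\ell$ this is immediate; when $d<\ell$ one must instead reach $v_0$ using $d$ slides and then let $\ell-d$ of the old $u_i$'s still ride in the window, which get pushed out precisely by the $\ell$ appends of Phase 2 — so the count is still $d+\ell$. A secondary point to verify is that we may always orient the chosen shortest $G$-path from $u_\ell$ to $v_0$ (rather than, say, from $u_0$ to $v_\ell$); but since $S^\ell$ identifies a walk with its conjugate, and since both endpoints $u_0,u_\ell$ of $\u$ (and $v_0,v_\ell$ of $\vecv$) are available as handles, we are free to pick whichever pair of endpoints is convenient, which also removes any issue with the edge $(u_{\ell-1},w)$ versus $(v,u_0)$ in the definition. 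Once these indexing details are pinned down the bound follows, and I expect the whole argument to take under a page.
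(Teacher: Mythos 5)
Your proposal is correct and is essentially the same argument the paper gives (for the more general mixed-graph version in Section 3.1): concatenate the walk defining $\u$, a shortest path from $u_\ell$ to the first vertex of $\vecv$, and the walk defining $\vecv$, then slide the length-$(\ell+1)$ window along this concatenation, using $p+\ell\le k+\ell$ shifts. Your worry about the case $d<\ell$ is unnecessary --- the window simply advances one position per step along the concatenated walk of length $2\ell+d$, so the count is $d+\ell$ in all cases.
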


\begin{figure}[t]
\begin{center}
\vskip-.75cm
\includegraphics[width=12cm]{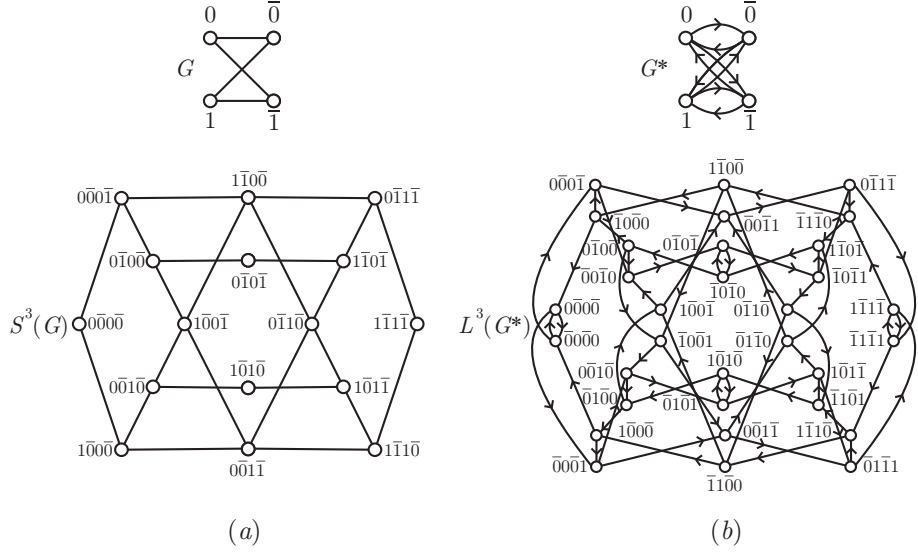}
\vskip-8cm
\caption{$(a)$ A graph $G$ and its  3-sequence graph $S^3(G)$;
$(b)$ The symmetric digraph $G^*$ and its 3-iterated line digraph $L^3(G^*)$.}
\label{fig:S3G-L3G*}
\end{center}
\end{figure}

\section{The Degree/diameter problem for mixed graphs}

The degree/diameter problem for mixed graphs asks for the largest possible number of vertices $n(r,z,k)$ in a mixed graph with maximum undirected degree $r$, maximum directed out-degree $z$, and diameter $k$. Most of the main results regarding this problem for undirected and directed graphs can be found in the comprehensive survey of Miller and \v{S}ir\'a\v{n} \cite{ms13}. Nevertheless, little is known about this extremal problem on mixed graphs. A natural upper bound for $n(r,z,k)$ is derived just by counting the number of vertices at every distance from any given vertex $v$ in a mixed graph with given maximum undirected degree $r$, maximum directed out-degree $z$, and diameter $k$. This bound is known as the {\em Moore bound} for mixed graphs (see Buset, El Amiri, Erskine, Miller, and P\'{e}rez-Ros\'{e}s~\cite{baemp15}):
\begin{equation}
\label{eq:moorebound4}
M(r,z,k)=A\frac{u_1^{k+1}-1}{u_1-1}+B\frac{u_2^{k+1}-1}{u_2-1},
\end{equation}
where
%
\begin{align}
v   &=(z+r)^2+2(z-r)+1, \label{v} \\
u_1 &=\frac{1}{2}(z+r-1-\sqrt{v}),\qquad u_2=\frac{1}{2}(z+r-1+\sqrt{v}),\label{u's} \\
A &=\frac{\sqrt{v}-(z+r+1)}{2\sqrt{v}},
\qquad
B =\displaystyle{\frac{\sqrt{v}+(z+r+1)}{2\sqrt{v}}}. \nonumber
\end{align}

Another way to compute the Moore bound for mixed graphs is as follows. Let $G$ be a $(r,z)$-regular mixed graph with $d=r+z$. Given a vertex $v$ and for $i=0,1,\ldots,k$, let $N_i$ be the maximum number of vertices at distance $i$ from $v$. Let $N_i=R_i+Z_i$, where $R_i$ is the number of vertices that, in the corresponding tree rooted at $v$,  have an edge with their parents; and  $Z_i$ is the number of vertices that has an arc from their parents. Then,
$$
N_i = R_i+Z_i = ((r-1)+z)R_{i-1}+(r+z)Z_{i-1}.
$$
That is, $R_i = (r-1)R_{i-1}+rZ_{i-1}$, and $Z_i = zR_{i-1}+zZ_{i-1}$ or, in matrix form,
$$
\left(
\begin{array}{c}
  R_i \\
  Z_i
\end{array}
\right)=
\left(
\begin{array}{cc}
  r-1 & r\\
  z   & z
\end{array}
\right)
\left(
\begin{array}{c}
  R_{i-1} \\
  Z_{i-1}
\end{array}
\right)=\cdots=\M^i\left(
\begin{array}{c}
  R_{0} \\
  Z_{0}
\end{array}
\right)=
\M^i\left(
\begin{array}{c}
  0 \\
  1
\end{array}
\right),
$$
where $\M=\left(
\begin{array}{cc}
  r-1 & r\\
  z   & z
\end{array}
\right)$, and by convenience $R_0=0$ and $Z_0=1$. Therefore,
$$
N_i = R_i+Z_i =\left(
\begin{array}{cc}
  1 & 1
\end{array}
\right)\M^i\left(
\begin{array}{c}
  0 \\
  1
\end{array}
\right).
$$
Consequently, the Moore bound for mixed graphs is
\begin{align}
  M(r,z,k) &= \sum_{i=0}^{k}N_i=\left(
\begin{array}{cc}
  1 & 1
\end{array}
\right)
\sum_{i=0}^{k}\M^i
\left(
\begin{array}{c}
  0 \\
  1
\end{array}
\right) \nonumber
\\
   &= \left(
\begin{array}{cc}
  1 & 1
\end{array}
\right)
(\M^{k+1}-\I)(\M-\I)^{-1}\left(
\begin{array}{c}
  0 \\
  1
\end{array}
\right) \nonumber
\\
 &=\frac{1}{r+2z-2}\left(
\begin{array}{cc}
  1 & 1
\end{array}
\right)(\M^{k+1}-\I)\left(
\begin{array}{c}
  r \\
  2-r
\end{array}
\right),
\label{MooreBound2}
\end{align}
where we used that
$$
(\M-\I)^{-1}=\frac{1}{r+2z-2}
\left(
\begin{array}{cc}
  1-z & r\\
  z   & 2-r
\end{array}
\right),
$$
with $r+2z\neq2$. Notice that this condition implies that there are two forbidden cases: $r=0$ and $z=1$ that corresponds to a directed cycle, and $r=2$ and $z=0$ that corresponds to an undirected cycle.
Of course, \eqref{MooreBound2} can be shown equivalent to \eqref{eq:moorebound4} since the eigenvalues of  $\M$ are precisely $u_1$ and $u_2$ given in \eqref{u's}. Then, $\M^{k+1}=\V\D^{k+1}\V^{-1}$, where $\D=\diag(u_1,u_2)$, and
$$
\V=\left(\begin{array}{cc}
\frac{r-z-1+\sqrt{v}}{2z} & \frac{r-z-1-\sqrt{v}}{2z}\\
1 & 1
\end{array}
\right),
$$
where $v$ is given by \eqref{v}.

{\em Mixed Moore graphs} are those with order attaining \eqref{eq:moorebound4}, which means that between any pair of vertices there is a unique shortest path of length not greater than the diameter. As a consequence, mixed Moore graphs must be totally regular of degree $(r,z)$ (see Bos\'{a}k~\cite{B79}). Nguyen, Miller, and Gimbert \cite{nmg07} showed that mixed Moore graphs only exist for diameter $k=2$. Although Moore graphs and digraphs are unique when their existence is known, this fact is no longer true for the mixed case. For instance, there exists a unique mixed Moore graph of parameters $r=3$ and $z=1$, known as Bos\'{a}k graph (see again \cite{B79}), but there are at least two nonisomorphic mixed Moore graphs with $r=3$ and $z=7$ (see J{\o}rgensen \cite{J15}). Recently, L\'opez, Miret and Fern\'{a}ndez \cite{LMF2015} proved the nonexistence of mixed Moore graphs of orders $40$, $54$ and $84$, corresponding to the $(r,z)$ pairs $(3,3)$, $(3,4)$ and $(7,2)$. Besides, there are many pairs $(r,z)$ for which
the existence of a mixed Moore graph remains as an open problem. Mixed graphs of diameter $k$ with maximum undirected degree $r$, maximum directed out-degree $z$ and order just one less than the mixed Moore bound are known as {\em mixed almost Moore graphs}. They have been studied only for diameter $k=2$ (see L\'{o}pez and Miret \cite{LM2016}).

\section{Sequence mixed graphs}

One can consider the extension of the definition of sequence graphs for mixed graphs. Since walks may contain arcs, the definition should be slightly modified in the following way.

\begin{definition}
\label{def:gen-seq-graphs}
Given a mixed graph $G$, the vertices of the {\em sequence mixed graph} $S^{\ell}(G)$ of $G$ are all the walks $u_0u_1\dots u_{\ell-1}u_{\ell}$ of length $\ell$ of $G$. The walk $u_0u_1\dots u_{\ell-1}u_{\ell}$ and its conjugate $u_{\ell}u_{\ell-1}\dots u_1u_0$ are the same sequence $($or walk$)$ if and only if this walk is undirected. Moreover, in $S^{\ell}(G)$, vertex $\vecu=u_0u_1\dots u_{\ell-1}u_{\ell}$ has the following adjacencies:
\begin{itemize}
\item
If the walk $u_0u_1\dots u_{\ell-1}u_{\ell}$ is undirected, vertex $\vecu$ is adjacent, through edges, with the vertices $\vecv=u_1u_2\dots  u_{\ell}u_{\ell+1}$   when $(u_{\ell},u_{\ell+1})$ are  edges of $G$ and/or the vertices $\vecv'=u_{0}u_{1}\dots u_{\ell-1}$,  when  $(u_{0},u_{1})$ are  edges of $G$. Here, we assume that $\vecv$ and $\vecv'$ are distinct from $\vecu$ $($loops are not considered\/$)$.
\item
Otherwise, vertex $\vecu$ is adjacent, through arcs, to the vertices $\vecw=u_1u_2\dots  u_{\ell}u_{\ell+1}$, where $(u_{\ell},u_{\ell+1})$ is either an arc or an edge of $G$.
\end{itemize}
\end{definition}

Intuitively speaking, notice that one walk is adjacent to another one if there exists a `walk-displacement' of length one into the mixed graph that overlaps the first sequence onto the second one.
Moreover, $(\u,\vecv)$, but not $(\vecv,\u)$, is an arc of $S^{\ell}(G)$ if $(u_i,u_{i+1})$ is an arc of $G$ for some $i=0,\ldots,\ell$, whereas $(\u,\vecv)$ is an edge (or digon) of $S^{\ell}(G)$ otherwise.

In the case when $G$ is totally regular, the following lemma gives information about the degrees and order of its $\ell$-sequence mixed graph.

\begin{proposition}
\label{prop:reg1}
Let $G$ be an $(r,z)$-regular mixed graph on $n$ vertices. Let $\A$ be the adjacency matrix of the undirected subgraph of $G$. Let $V_1$ be the subset  of $\ell$-walks of $G$ (or vertices of $S^{\ell}(G)$) containing only edges, and let $V_2$ be the subset of $\ell$-walks of $G$ containing at least one arc. Let $S^{\ell}(G)$ have $N_{\ell}(G)$ vertices, maximum undirected degree $\Delta$, and maximum directed in-degree and out-degree $\Delta^*$ of $S^{\ell}(G)$. Then,
\begin{itemize}
\item[$(a)$] The number of vertices in $V_1$ is
$$
|V_1|=\left\{
\begin{array}{ll}
\frac{1}{2}nr^{\ell} & \mbox{if\ \ $\ell$ is odd,}\\[.1cm]
\frac{n}{2}(r^{\ell}+r^{\ell/2}) & \mbox{if\ \ $\ell$ is even,}
\end{array}
\right.
$$
and, for any $\vecv \in V_1$,
\begin{enumerate}
\item
$d(\vecv)=1$, and $d^+(\vecv)=d^-(\vecv)=z$, \ if $r=1$ and $\ell$ is even,
\item
$d(\vecv)=2r-2$, and $d^+(\vecv)=d^-(\vecv)=2z$, \ if $\ell=1$,
\item
$d(\vecv)\le 2r$, and $d^+(\vecv)=d^-(\vecv)\le 2z$, otherwise.
\end{enumerate}
\item[$(b)$] The number of vertices in $V_2$ is
$$
|V_2|=n[(r+z)^{\ell}-r^{\ell}]
$$
and, for any $\vecv \in V_2$,
\begin{enumerate}
\item
$d(\vecv)=0$, and $d^+(\vecv)=d^-(\vecv)=r+z$.
\end{enumerate}
\item[$(c)$] The number of vertices of  $S^{\ell}(G)$ is
$$
N_{\ell}(G)=\left\{
\begin{array}{ll}
n\left[(r+z)^{\ell}+\frac{1}{2}r^{\ell}\right]& \mbox{ \ if\ \ $\ell$ is odd,}\\[.2cm]
n\left[(r+z)^{\ell}+\frac{1}{2}(r^{\ell}+r^{\ell/2})\right] & \mbox{ \ if\ \ $\ell$ is even,}
\end{array}
\right.
$$
and
\begin{enumerate}
\item
$\Delta=1$, and $\Delta^*=z$, \ if $r=1$ and $\ell$ is even,
\item
$\Delta = 2r-2$, and $\Delta^*=\max\{r+z,2z\}$, \ if $\ell=1$,
\item
$\Delta = 2r$, and $\Delta^*=\max\{r+z,2z\}$.
\end{enumerate}
\end{itemize}
\end{proposition}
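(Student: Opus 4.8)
The plan is to argue directly from Definition~\ref{def:gen-seq-graphs} together with the regularity of $G$, handling the three order statements ((a), (b), (c)) by counting walks and the degree statements by a careful enumeration of the ``slide'' adjacencies, split into the announced sub-cases.

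For the orders I would first record the only two quantitative inputs needed: since $G$ is $(r,z)$-regular, its undirected subgraph is $r$-regular, so the number of undirected $\ell$-walks of $G$ (as ordered vertex sequences) is $nr^{\ell}$, while the number of all $\ell$-walks of $G$ is $n(r+z)^{\ell}$ (at each of the $\ell$ steps there are $r$ choices along an edge and $z$ along a pure arc). For (b), a walk containing a pure arc is never equal to its conjugate and in fact its conjugate fails to be a walk of $G$ at all (reversing a pure arc leaves $G$), so no identification occurs among such walks and $|V_2|=n(r+z)^{\ell}-nr^{\ell}$; the degree claim $d(\vecv)=0$, $d^{+}(\vecv)=d^{-}(\vecv)=r+z$ then follows once one checks that such a $\vecv$ cannot be ``$2$-periodic'' (a $2$-periodic walk alternates between two vertices and hence uses only digons, i.e.\ is undirected), which rules out any coincidence among the $r+z$ right-extensions, among the $r+z$ left-extensions, and between the two families; in particular no digon can appear, so $d(\vecv)=0$.

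For (a) I would apply the orbit-counting lemma to the order-two action ``walk $\mapsto$ conjugate'' on the $nr^{\ell}$ undirected $\ell$-walks: $|V_1|=\tfrac12(nr^{\ell}+f)$, where $f$ is the number of palindromic undirected $\ell$-walks. Since consecutive vertices of a walk in a simple loopless graph are distinct, there are no palindromes when $\ell$ is odd ($f=0$), whereas for $\ell$ even a palindrome is determined by its first $\ell/2$ steps, giving $f=nr^{\ell/2}$; this yields the two formulas for $|V_1|$. For the degrees of $\vecv\in V_1$ I would enumerate the edge-adjacencies (right and left slides along an edge, at most $r$ each, hence $d(\vecv)\le 2r$) and the arc-adjacencies (right and left slides along a pure arc, at most $z$ each, hence $d^{+}(\vecv)=d^{-}(\vecv)\le 2z$, the equality $d^{+}=d^{-}$ coming from the fact that conjugation turns out-slides into in-slides, since $\vecv$ is identified with $\bar\vecv$). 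The two exceptional cases are exactly those in which slides collapse: if $\ell=1$, the backtracking slide produces the conjugate of $\vecv$ itself, i.e.\ a loop, so one edge-slide is lost on each side, leaving $d(\vecv)=2r-2$ while the two families of arc-slides stay disjoint and give $d^{\pm}(\vecv)=2z$; if $r=1$ and $\ell$ is even, the unique undirected $\ell$-walk through a matching edge is a palindrome with $\bar\vecv=\vecv$, so its left- and right-slides coincide, leaving a single edge-neighbor and $d^{\pm}(\vecv)=z$.

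Part (c) is then simply $N_{\ell}(G)=|V_1|+|V_2|$, with $\Delta$ and $\Delta^{*}$ read off as the maxima of the degrees over $V_1\cup V_2$ obtained above, exhibiting in each case a vertex (for instance one coming from a path of length $\ell$ when one exists, or an explicit walk otherwise) that attains the bound $2r$, resp.\ $\max\{r+z,2z\}$. The main obstacle I anticipate is organizing the degree analysis on $V_1$: for every extension one must decide whether the slid walk is a loop, whether it coincides with a slide at the other end (which happens precisely for palindromes and for the $\ell=1$ backtracks), and whether the resulting adjacency is an edge or an arc (equivalently whether the slid walk lands in $V_1$ or $V_2$). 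Keeping this bookkeeping of cases straight — rather than any single difficult idea — is what makes the argument long.
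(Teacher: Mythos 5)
Your proposal is correct and follows essentially the same route as the paper: count the $nr^{\ell}$ undirected and $n(r+z)^{\ell}$ total rooted $\ell$-walks, quotient the undirected ones by conjugation (with $nr^{\ell/2}$ palindromic fixed points only when $\ell$ is even), observe that no identification occurs in $V_2$, and read the degrees off Definition~\ref{def:gen-seq-graphs}. Your orbit-counting phrasing and the explicit case analysis of loops, palindromes, and $\ell=1$ backtracks merely spell out what the paper dismisses as ``a simple consequence of the definition.''
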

\begin{proof}
$(a)$ The undirected subgraph of $G$ is $r$-regular and, hence, the number of walks rooted at a given vertex is $r^{\ell}$ for a total of $nr^{\ell}$ in the graph. These  will be double counting vertices in our graph if the walk is self-conjugate (that is, the sequence of vertices defining the walk is palindromic), which is only possible for even $\ell$.
Then, the value of $|V_1|$ results by taking into account that the total number of walks satisfying this condition is $nr^{\ell/2}$. (Indeed, each of such walks is constituted by two equal walks of length $\ell/2$  one going forward and the other backward.) The values of the undirected and directed degrees of every vertex $\vecv\in V_1$ is a simple consequence of Definition \ref{def:gen-seq-graphs}.

$(b)$ Seen as a digraph, $G$ is $(r+z)$-regular. Hence, its total number of rooted $\ell$-walks is $n(r+z)^\ell$. But $nr^{\ell}$ of them consist only of edges (note that, seen as directed walks, there is no double counting). Then, the result follows. Again, the values of the degrees follow from Definition \ref{def:gen-seq-graphs}.

$(c)$ Simply notice that $N_{\ell}(G)=|V_1|+|V_2|$.
\end{proof}

Note that, in some cases, the degrees of $\vecv$ in $(a)3$ can be smaller than the maximum values given there. This precisely happens when $\ell$ is even, say $\ell=2l$, and vertex $\vecv$  is self-conjugate, $\vecv=v_0\ldots v_{l-1}v_{l}v_{l-1}\ldots v_0$, in which case $d(\vecv)=r$ and $d^+(\vecv)=d^-(\vecv)=z$.

In general, as shown in the following result, the $\ell$-sequence of any mixed graph $G$ can be obtained from the $\ell$-iterated line digraph
of its associated digraph $G^*$.

\begin{theorem}
\label{th:sequencenc-line-digraph}
Let $G$ be a mixed graph and $G^*$ its associated digraph. Then, the sequence mixed graph  $S^{\ell}(G)$ can be obtained from the $\ell$-iterated line digraph $L^{\ell}(G^*)$ by identifying each vertex $\u=u_0u_1\dots u_{\ell-1}u_{\ell}$ with its `conjugate' $\overline{\u}=u_{\ell}u_{\ell-1}\dots u_{1}u_0$, if any, and replacing the resulting digons by edges.
\end{theorem}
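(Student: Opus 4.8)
The plan is to exhibit an explicit map $\varphi$ from the vertex set of $L^{\ell}(G^*)$ onto the vertex set of $S^{\ell}(G)$ and check that it has exactly the identification/merging behaviour claimed, that is, $\varphi$ identifies a vertex with its conjugate whenever that conjugate is again a vertex, and is injective otherwise. Both $L^{\ell}(G^*)$ and $S^{\ell}(G)$ have as vertices the length-$\ell$ walks of $G^*$ (equivalently of $G$, since $G$ and $G^*$ have the same walks); the only difference in vertex sets is that in $S^{\ell}(G)$ an undirected walk and its conjugate are declared equal. So I would define $\varphi$ to send a walk $\u=u_0u_1\dots u_\ell$ to the equivalence class $[\u]$ under the conjugation relation of Definition~\ref{def:gen-seq-graphs}, and note that $\varphi$ is surjective by construction and that its fibres are precisely $\{\u,\overline{\u}\}$ when $\u$ is undirected and $\overline{\u}\neq\u$, and singletons otherwise. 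This handles the ``identifying each vertex with its conjugate'' part purely on the level of vertex sets.

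The substantive part is to verify that $\varphi$ transports the arc set of $L^{\ell}(G^*)$ exactly onto the (directed) adjacency structure of $S^{\ell}(G)$, and then that the digons created by the identification are exactly the undirected edges prescribed by Definition~\ref{def:gen-seq-graphs}. First I would show that every arc of $L^{\ell}(G^*)$ maps to an adjacency of $S^{\ell}(G)$: an arc in $L^{\ell}(G^*)$ joins $\u=u_0\dots u_\ell$ to $\vecv=u_1\dots u_{\ell}u_{\ell+1}$ with $(u_\ell,u_{\ell+1})$ an arc of $G^*$; splitting into the cases ``$\u$ undirected'' and ``$\u$ contains an arc'' and comparing with the two bullets of Definition~\ref{def:gen-seq-graphs} shows $([\u],[\vecv])$ is the required adjacency (an arc, or a half of a digon if $\vecv$ happens also to be undirected and the reverse shift is also legal). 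Conversely, every adjacency of $S^{\ell}(G)$ lifts: a forward edge/arc $\u\to u_1\dots u_{\ell+1}$ in $S^{\ell}(G)$ comes from the corresponding line-digraph arc, while a ``backward'' edge $\u\to u_{-1}u_0\dots u_{\ell-1}$ in $S^{\ell}(G)$ (the $\vecv'$ case) lifts to the line-digraph arc out of $\overline{\u}$, which is identified with $\u$ under $\varphi$; this is exactly where the conjugation identification does its work and must be tracked carefully. Finally I would argue that after identification a pair of antiparallel arcs (a digon) arises precisely when both shift directions are available at an undirected walk — equivalently when both end-pairs $(u_0,u_1)$ and $(u_{\ell-1},u_\ell)$ are edges of $G$ — which matches the first bullet of the definition, so replacing those digons by edges reproduces $S^{\ell}(G)$ on the nose.

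I expect the main obstacle to be bookkeeping around the boundary cases rather than any deep idea: (i) keeping straight, when $\u$ is undirected, that ``the neighbours $\vecv=u_1\dots u_{\ell+1}$'' and ``the neighbours $\vecv'=u_0\dots u_{\ell-1}$'' of Definition~\ref{def:gen-seq-graphs} correspond to the out-arcs of $\u$ and of $\overline{\u}$ respectively in $L^{\ell}(G^*)$; (ii) the self-conjugate (palindromic) walks, where $\varphi$ is injective but the vertex has ``folded'' degree, so that some of the potential arcs coincide and one must check no spurious loop is created (the definition explicitly discards loops, and so must the identification argument); and (iii) verifying that the orientation convention stated just after Definition~\ref{def:gen-seq-graphs} — that $(\u,\vecv)$ is an arc iff some step $(u_i,u_{i+1})$ is a genuine arc of $G$ — is compatible with the line-digraph picture, i.e.\ a walk in $G^*$ is ``undirected'' as a walk iff every one of its steps is a digon. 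Once these conventions are pinned down, the verification that $\varphi$ is the claimed quotient map is a finite case check, and the theorem follows.
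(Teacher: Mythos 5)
Your proposal is correct and takes essentially the same route as the paper: the paper's own proof is a three-sentence version of exactly this quotient-map argument (vertices of $L^{\ell}(G^*)$ are the $\ell$-walks of $G$, conjugate pairs of undirected walks collapse to a single vertex of $S^{\ell}(G)$, and adjacencies carry over with digons becoming edges). Your write-up is in fact more detailed than the paper's, since you explicitly track the backward shifts through the conjugate vertices and flag the palindromic/loop boundary cases that the paper leaves implicit.
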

\begin{proof}
First, notice that every vertex of $L^{\ell}(G^*)$ corresponds to an $\ell$-sequence in $G$ and, hence, to a vertex of $S^{\ell}(G)$. Moreover, the existence of a vertex $\u=u_0u_1\dots u_{\ell-1}u_{\ell}$ and its conjugate $\overline{\u}$ (which can coincide with $\u$) indicates the presence of the undirected walk $u_0u_1\dots u_{\ell-1}u_{\ell}$ and, hence, $\u$ and $\overline{\u}$ correspond to the same $\ell$-sequence in $G$ (or vertex in  $S^{\ell}(G)$).
Finally, each adjacency in $L^{\ell}(G^*)$ corresponds to an adjacency in $S^{\ell}(G)$, and digons turn into edges.
\end{proof}

In particular, if $G$ is a graph, then each vertex $\u$ of $L^{\ell}(G^*)$ has one conjugate $\overline{\u}$ (not necessarily different from $\u$) and $S^{\ell}(G)$ turns out to be the $\ell$-sequence graph, as it was defined by Fiol, Yebra and F\`abrega \cite{fyf83}. Otherwise, if $G$ is a digraph, $S^{\ell}(G)$ coincides with the $\ell$-iterated line digraph $L^{\ell}(G)$ studied in Fiol, Yebra and Alegre~\cite{FiYeAl83,FiYeAl84}.

An example of this situation is shown again in Figure \ref{fig:S3G-L3G*}.
Note that, in fact, the above theorem shows the existence of a homomorphism  from iterated line digraphs to sequence mixed graphs:
$L^{\ell}(G^*) \rightarrow S^{\ell}(G)$.

Another consequence of Theorem \ref{th:sequencenc-line-digraph} is the following result.

\begin{corollary}
\label{coro-Sl=Ll}
Let $G$ be a mixed graph with maximum undirected degree $\Delta=1$. Then, for even $\ell\ge 2$, the $\ell$-sequence mixed graph $S^{\ell}(G)$ is isomorphic to the $\ell$-iterated line digraph $L^{\ell}(G)$ provided that the edges of the former are mapped to the digons of the latter.
\end{corollary}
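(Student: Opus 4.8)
The plan is to deduce the statement almost directly from Theorem~\ref{th:sequencenc-line-digraph}: the point is that under the hypotheses $\Delta(G)=1$ and $\ell$ even the vertex identification in that theorem becomes vacuous, so that $S^{\ell}(G)$ and $L^{\ell}(G^*)$ have the same vertices and arcs and differ only in that the digons of the latter are rewritten as edges of the former. Recalling that for a mixed graph $L^{\ell}(G)$ means the $\ell$-iterated line digraph of the associated digraph $G^*$, this is exactly the asserted isomorphism ``with edges mapped to digons''.

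First I would identify which vertices of $L^{\ell}(G^*)$ are touched by the identification. A vertex $\u=u_0u_1\dots u_{\ell}$ has a conjugate $\overline{\u}=u_{\ell}u_{\ell-1}\dots u_0$ that is \emph{again} a vertex of $L^{\ell}(G^*)$ precisely when, for every $i=1,\dots,\ell$, both $(u_{i-1},u_i)$ and $(u_i,u_{i-1})$ are arcs of $G^*$; by the definition of $G^*$ this says exactly that each step $\{u_{i-1},u_i\}$ is an edge of $G$, i.e.\ that $\u$ is an \emph{undirected} $\ell$-walk of $G$. This step is purely definitional and needs no hypothesis on $G$.

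The key point is the following claim: if $\Delta(G)=1$ and $\ell$ is even, then every undirected $\ell$-walk $\u=u_0u_1\dots u_{\ell}$ of $G$ is a palindrome, so that $\overline{\u}=\u$. Indeed, since $G$ has no loops, consecutive vertices of $\u$ are distinct; and since each inner vertex $u_i$ $(1\le i\le\ell-1)$ is incident with the edge $\{u_{i-1},u_i\}$ and $\Delta(G)=1$, this is its \emph{only} incident edge, forcing $u_{i+1}=u_{i-1}$. Hence $\u=u_0u_1u_0u_1\dots$ alternates between $u_0$ and $u_1$, and for \emph{even} $\ell$ the last vertex is $u_0$, so $u_0u_1u_0\dots u_1u_0$ is a palindrome. (For odd $\ell$ the conjugate would be a genuinely different vertex, which is why evenness is used; I would include this remark so the hypothesis is seen to be essential.)

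Combining the two steps, every vertex of $L^{\ell}(G^*)$ whose conjugate is again a vertex is self-conjugate, so the identification prescribed in Theorem~\ref{th:sequencenc-line-digraph} identifies no two distinct vertices: the natural projection $L^{\ell}(G^*)\to S^{\ell}(G)$ is a bijection on vertices that preserves all arcs, and the only remaining operation is turning each digon of $L^{\ell}(G^*)$ into an edge of $S^{\ell}(G)$. This yields $S^{\ell}(G)\cong L^{\ell}(G)$ with the edges of $S^{\ell}(G)$ corresponding exactly to the digons of $L^{\ell}(G)$. I expect the only delicate point to be the palindrome claim — making sure the no-loops convention rules out the degenerate case $u_0=u_1$ and that the evenness of $\ell$ is genuinely used; the rest is bookkeeping with the definitions and Theorem~\ref{th:sequencenc-line-digraph}.
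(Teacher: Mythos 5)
Your proposal is correct and follows essentially the same route as the paper: invoke Theorem~\ref{th:sequencenc-line-digraph} and observe that, since $\Delta=1$ forces every undirected walk to alternate between the two endpoints of a single edge, an undirected walk of even length is a palindrome, so the conjugate-identification is vacuous and only the digon-to-edge replacement remains. Your write-up is in fact slightly more explicit than the paper's about why only undirected walks have conjugates and why evenness of $\ell$ is needed, but the underlying argument is identical.
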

\begin{proof}
As $\Delta=1$, the undirected subgraph of $G$ consists of some complete graphs $K_2$ with two vertices and one edge. Then, the associated digraph $G^*$ of $G$ is obtained by replacing each of such edges by a digon and, since $\ell$ is even, $G^*$ has no conjugate vertices. Besides, for every pair of vertices $u,v$ of a digon, $L^{\ell}(G)$, and also  $S^{\ell}(G)$, has the vertices $uvuv\ldots uv$ and $vuvu\ldots vu$ forming a digon in the former and an edge in the latter. All the other adjacencies of $L^{\ell}(G)$ remain unchanged in $S^{\ell}(G)$. Then, the claimed isomorphism follows.
\end{proof}

\subsection{Distance-related parameters}\label{sec:drp}

As expected, the diameter and average distance of a sequence mixed graph satisfy analogous results to those in Theorem \ref{th:diam-line}.

\begin{theorem}\label{th:diam-mixed}
Let $G$ be a mixed graph with diameter $k$ and average distance $\overline{k}$. Then, the diameter $k_{\ell}$ and  average distance $\overline{k}_{\ell}$ of the $\ell$-sequence mixed graph $S^{\ell}(G)$ satisfy
\begin{align}
k_{\ell} &\le  k+\ell, \\
\overline{k}_{\ell} &< \overline{k}+\ell.\label{eq:dist-mitj}
\end{align}
\end{theorem}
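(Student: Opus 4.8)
The plan is to obtain both inequalities from one elementary fact about $S^{\ell}(G)$: sliding a window of length $\ell$ one step forward along \emph{any} walk of $G$ is a (directed) step of $S^{\ell}(G)$. Precisely, if $w_0w_1\cdots w_m$ is a walk of $G$ with $m\ge\ell$ and $\vecw_i:=w_iw_{i+1}\cdots w_{i+\ell}$, then by Definition~\ref{def:gen-seq-graphs} and the remark after it there is an arc or an edge from $\vecw_i$ to $\vecw_{i+1}$ for each $i$, so $\vecw_0,\vecw_1,\dots,\vecw_{m-\ell}$ is a walk of length $m-\ell$ in $S^{\ell}(G)$.

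\textbf{Diameter.} Take vertices $\u=u_0\cdots u_{\ell}$ and $\vecv=v_0\cdots v_{\ell}$ of $S^{\ell}(G)$, fixing representative $\ell$-walks of $G$. With $d:=\dist(u_{\ell},v_0)\le k$ and $Q$ a shortest $u_{\ell}$-$v_0$ path in $G$, the concatenation of $u_0\cdots u_{\ell}$, of $Q$, and of $v_1\cdots v_{\ell}$ is a walk of $G$ of length $2\ell+d$ whose first window is $\u$ and whose last window is $\vecv$; by the fact above $\dist_{S^{\ell}(G)}(\u,\vecv)\le\ell+d\le k+\ell$. (Equivalently, Theorem~\ref{th:sequencenc-line-digraph} presents $S^{\ell}(G)$ as a homomorphic image of $L^{\ell}(G^{*})$, and $G$ and $G^{*}$ have the same distances, so the bound also drops out of~\eqref{(1)}.)

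\textbf{Average distance.} I assume $G$ totally regular of degree $(r,z)$ (as in Proposition~\ref{prop:reg1}) and set $n=|V(G)|$, $N=N_{\ell}(G)$, $d=r+z$. For a vertex $\x$ of $S^{\ell}(G)$ let $\partial^{+}\x$ and $\partial^{-}\x$ be the sets of possible last and first vertices of an $\ell$-walk of $G$ representing $\x$: both equal the one or two endpoints of the walk when $\x$ comes from an undirected walk, while they are the singletons formed by the last and the first vertex of the walk, respectively, when $\x$ comes from a walk containing an arc. Running the diameter argument from each available orientation of the representatives gives, for all $\x,\y$,
$$
\dist_{S^{\ell}(G)}(\x,\y)\ \le\ \ell+\min_{a\in\partial^{+}\x,\ b\in\partial^{-}\y}\dist_G(a,b)\ \le\ \ell+\frac{1}{|\partial^{+}\x|\,|\partial^{-}\y|}\sum_{a\in\partial^{+}\x,\ b\in\partial^{-}\y}\dist_G(a,b),
$$
and after summing over all ordered pairs and collecting terms,
$$
N^{2}\,\overline{k}_{\ell}=\sum_{\x,\y}\dist_{S^{\ell}(G)}(\x,\y)\ \le\ \ell N^{2}+\sum_{u,v\in V(G)}\dist_G(u,v)\,E(u)\,S(v),
$$
where $E(u)=\sum_{\x\,:\,u\in\partial^{+}\x}|\partial^{+}\x|^{-1}$ and $S(v)=\sum_{\y\,:\,v\in\partial^{-}\y}|\partial^{-}\y|^{-1}$, so $\sum_uE(u)=\sum_vS(v)=N$. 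The crux is that $E(u)$ and $S(v)$ are independent of $u$ and $v$. To see this for $E(u)$, classify the vertices $\x$ with $u\in\partial^{+}\x$: the $d^{\ell}-r^{\ell}$ walks with an arc ending at $u$ contribute $1$ each; the undirected classes that are closed walks based at $u$ contribute $\tfrac12(\A^{\ell})_{uu}+\tfrac12 r^{\ell/2}$ in total (reading $r^{\ell/2}$ as $0$ for odd $\ell$); and the open undirected classes with $u$ as one of their two endpoints, of which there are $r^{\ell}-(\A^{\ell})_{uu}$, contribute $\tfrac12$ each. The two copies of $(\A^{\ell})_{uu}$ cancel, leaving a value depending only on $r,z,\ell$; the computation for $S(v)$ is the same with ``starts'' in place of ``ends''. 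Being nonnegative and summing to $N$, each therefore equals $N/n$, so the error term is $\tfrac{N^{2}}{n^{2}}\sum_{u,v}\dist_G(u,v)=N^{2}\overline{k}$, which yields $\overline{k}_{\ell}\le\overline{k}+\ell$; and the inequality is strict because for $\x=\y$ the first displayed bound is slack by at least $\ell\ge1$.

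\textbf{Main obstacle.} The diameter bound is routine once one checks that a forward window-slide is always a legal step; the work is the last computation. The average-distance bound hinges on the fractional endpoint-multiplicities $E(u),S(v)$ being constant, and this is where total regularity enters -- it makes the relevant walk-counts, and the number of palindromic closed $\ell$-walks through a vertex, independent of the vertex, so that the $(\A^{\ell})_{uu}$-terms cancel against honest constants. For a non-regular mixed graph $E(u)$ and $S(v)$ need no longer be constant, and one would then have to dominate the weighted mean $\sum_{u,v}\dist_G(u,v)E(u)S(v)$ by the uniform one; that comparison is the genuinely non-formal ingredient, and it is the same difficulty that restricts the analogous Theorem~\ref{th:diam-line} for iterated line digraphs to the regular case.
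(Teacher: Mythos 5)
Your argument is correct where it applies, but it takes a genuinely different route from the paper's. The paper proves both inequalities in one stroke by viewing $S^{\ell}(G)$ as the quotient of $L^{\ell}(G^*)$ under conjugation (Theorem \ref{th:sequencenc-line-digraph}), invoking Theorem \ref{th:diam-line} for $L^{\ell}(G^*)$, and asserting that identifying vertices increases neither the diameter nor the average distance. For the diameter this is immediate (and the paper separately records the same concatenation/routing argument you give, so that part of your proof coincides with the paper's constructive remark). For the average distance, however, the quotient step is not formal: identification shrinks the normalizing factor $|V|^2$ as well as the distances, so $\overline{k}_{\ell}\le\overline{k}^*_{\ell}$ amounts to comparing a weighted mean of distances in $L^{\ell}(G^*)$ (with weights $1/(|[\x]|\,|[\y]|)$) against the uniform one. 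Your computation with the fractional endpoint multiplicities $E(u)$, $S(v)$ — showing that the $(\A^{\ell})_{uu}$ contributions of closed and palindromic undirected walks cancel so that $E(u)=S(u)=N/n$ for every $u$ — is exactly the missing bookkeeping, and it checks out: $E(u)=d^{\ell}-\tfrac12 r^{\ell}+\tfrac12 r^{\ell/2}=N_{\ell}(G)/n$, consistent with Proposition \ref{prop:reg1}$(c)$. So your proof of \eqref{eq:dist-mitj} is more self-contained and, on this point, more careful than the paper's.

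Two caveats. First, you prove the average-distance bound only for totally regular $G$; the theorem is stated without that hypothesis. This is not really a defect of your proof relative to the paper's, because the paper's own argument inherits the same restriction silently: the average-distance part of Theorem \ref{th:diam-line} is stated only for regular digraphs (different from a directed cycle), and $G^*$ regular means $G$ totally regular. You are right that the genuinely nonformal step in the general case is dominating $\sum_{u,v}\dist_G(u,v)E(u)S(v)$ by $N^2\overline{k}$ when $E$ and $S$ are not constant; neither you nor the paper supplies that. Second, a forward window-slide along a walk can occasionally land on the same vertex of $S^{\ell}(G)$ (when consecutive windows are conjugate undirected walks); this only shortens the resulting walk, so your distance bounds survive, but it is worth a sentence.
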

\begin{proof}
Clearly, the associated digraph $G^*$ has the same diameter $k$ and average distance $\overline{k}$ as $G$.
Moreover, by Theorem \ref{th:diam-line}, the diameter  and average distance of the iterated line digraph $L^{\ell}(G^*)$ satisfy $k^*_{\ell}=k+\ell$ and $\overline{k}^*_{\ell}<\overline{k}+\ell$. Consequently, the result follows since the procedure of Theorem \ref{th:sequencenc-line-digraph} to obtain $S^{\ell}(G)$ from $L^{\ell}(G^*)$ (by identifying vertices) clearly does not increase either the diameter or the average distance. That is, $k_{\ell}\le k^*_{\ell}$ and  $\overline{k}_{\ell}\le \overline{k}^*_{\ell}$.
\end{proof}

Note that, in particular, if $G$ is a graph, then (\ref{eq:dist-mitj}) provides a bound for the average distance of its $\ell$-sequence graph, and this extends the result of Theorem~\ref{th:diam} given in Fiol, Yebra and F\`{a}brega~\cite{fyf83}.

A constructive proof of the result on the diameter, which also gives a routing algorithm, is the following:
Given two vertices of $S^{\ell}(G)$, $\u=u_0u_1\dots u_{\ell-1}u_{\ell}$ and  $\vecv=v_1v_2\dots  v_{\ell}v_{\ell+1}$, let us consider a shortest path from $u_{\ell}$ to $v_1$ in $G$, say $u_{\ell},w_1,\ldots,w_{p-1},v_1$, of length $p=\dist(u_{\ell},v_1)\le k$. Then from our adjacency rule in Definition \ref{def:gen-seq-graphs}, we can shift the first $\ell$-sequence (defining $\u$) in
$$
u_0,u_1,\dots, u_{\ell-1},u_{\ell},w_1,\ldots,w_{p-1},v_1,v_2,\dots,  v_{\ell},v_{\ell+1}
$$
to the last one (defining $\vecv$), obtaining a path from $\u$ to $\vecv$ in $S^{\ell}(G)$ of length $p+\ell\le  k+\ell$.

\begin{figure}[t]
\begin{center}
\vskip-.75cm
\includegraphics[width=12cm]{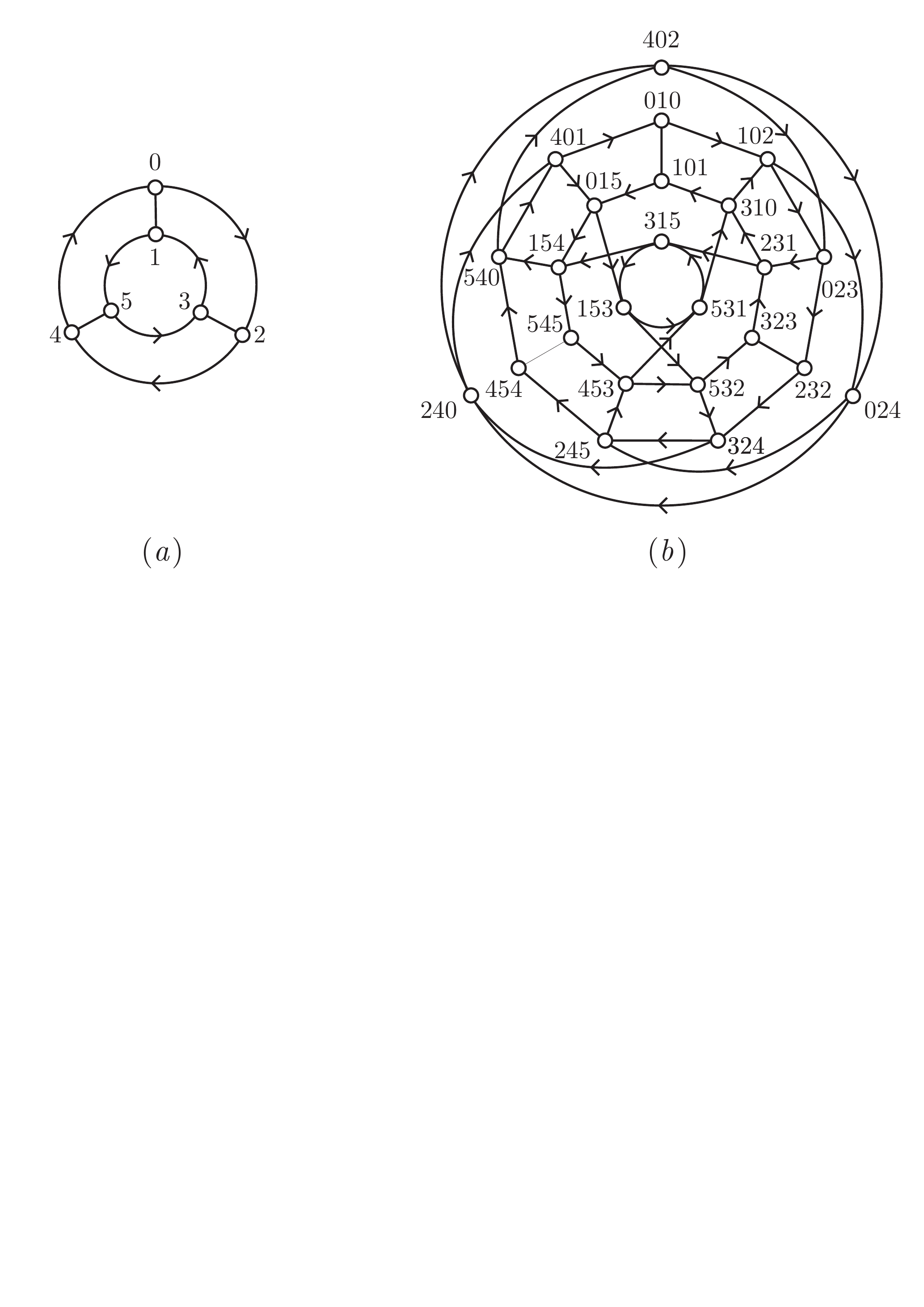}
\vskip-9.75cm
\caption{$(a)$ The Kautz digraph $K(2,2)$ as a mixed graph; $(b)$ Its 2-sequence graph.}
\label{fig:S^2(K(2,2))}
\end{center}
\end{figure}

For example, if $G$ is the Kautz digraph $K(2,2)$ (seen as a mixed graph) on $6$ vertices with degree $2$ and diameter $2$, its sequence mixed graph $S^2(G)$  has diameter $4$. Both mixed graphs are shown in Figure \ref{fig:S^2(K(2,2))}.

\subsection{Sequence mixed graph and the degree/diameter problem}

As it is implicit in the results of Proposition \ref{prop:reg1}, in general the sequence graph operator does not preserve the regularity of a mixed graph.
However,
in the context of the degree/diameter problem, our results lead to the following example of application.
The Kautz digraphs $G=K(\delta,2)$ are mixed Moore graphs of diameter $2$ for $r=1$ and $z=\delta-1$.
From \eqref{eq:moorebound4} with $k=2$, Proposition \ref{prop:reg1} with $\ell=2$, and Theorem \ref{th:diam-mixed}, its $2$-sequence mixed graph  has order
$$
|S^2(G)|=\frac{\delta(\delta+1)}{2}(2\delta^2-1)+\frac{\delta(\delta+1)}{2}=\delta^3(\delta+1),
$$
diameter $4$, undirected degree at most $r=1$ and directed degree at most $z=\delta$. Now, we can compare the order of $S^2(G)$ with the mixed Moore bound $M(1,\delta,4)$ which, according to \eqref{eq:moorebound4}, is
$$
M(1,\delta,4)=\delta^4+5\delta^3+7\delta^2+4\delta+2.
$$
Thus, asymptotically, with total degree $d=1+\delta$, the mixed graph $S^2(G)$ has order equal to the mixed Moore bound:
$$
\lim_{d\rightarrow \infty} \frac{|S^2(G)|}{M(1,\delta,4)}=1.
$$
In fact, notice that, according to Corollary \ref{coro-Sl=Ll}, $S^2(G)$ is isomorphic (up to the equivalence between edges and digons) with the Kautz digraph $K(\delta,4)=L^2(K(\delta,2))$.

In general, although the sequence mixed graph asymptotically provides good results in the context of the degree/diameter problem, it is far from giving large mixed graphs for small values of $r,z$ and $k$. For instance, consider $G$ to be again the referred Bos\'ak graph, with $18$ vertices, $r=3,z=1$ and $k=2$. According to Proposition \ref{prop:reg1}, $S^1(G)$ has maximum undirected degree $4$, maximum directed out-degree $4$ and diameter $3$. Then, the corresponding mixed Moore bound is $M(4,4,3)=521$, although $S^1(G)$ has only $45$ vertices. In order to obtain better results, we give the following proposition.

\begin{proposition}
\label{prop:imp}
Let $G$ be an $(r,z)$-regular mixed graph of order $n$, with $d=r+z$, for $r>1$, and consider its $\ell$-sequence mixed graph $S^{\ell}(G)$. Then, for any given integer $r'\in\{1,\ldots,z\}$ there exists a mixed graph $S^{\ell}_m(G)$ of order $N_{\ell}(G)$ given in Proposition \ref{prop:reg1}$(c)$, diameter $\diam(S^{\ell}_m(G)) \leq \diam(S^{\ell}(G))$, and the following maximum degrees:
\begin{itemize}
\item[$(a)$]
For $\ell=1$,  $S^{1}_m(G)$ has maximum undirected degree $\max\{2r-2,2r'\}$ and maximum directed out-degree $\max\{d-r',2z\}$.
\item[$(b)$]
For $\ell>1$,  $S^{\ell}_m(G)$ has maximum undirected degree $\max\{2r,2r'\}$ and maximum directed out-degree $\max\{d-r',2z\}$.
 \end{itemize}
\end{proposition}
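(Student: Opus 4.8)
The plan is to construct $S^\ell_m(G)$ from $S^\ell(G)$ --- equivalently, by Theorem~\ref{th:sequencenc-line-digraph}, from the iterated line digraph $L^\ell(G^*)$ --- by a modification that touches only the vertices of $V_2$, which are exactly the ones whose directed out-degree $d=r+z$ exceeds the target value $d-r'$. Fix $\vecu=u_0u_1\dots u_\ell\in V_2$; each of its $d$ out-neighbours in $S^\ell(G)$ has the form $\vecw=u_1u_2\dots u_\ell w$ with $(u_\ell,w)$ an arc of $G^*$, and at least $z$ of these again lie in $V_2$, namely those for which $(u_\ell,w)$ is an arc (rather than an edge) of $G$, so that $\vecw$ has a directed step. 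Since $r'\le z$, one may choose, for every $\vecu\in V_2$, a set of exactly $r'$ out-arcs of $S^\ell(G)$ whose heads all lie in $V_2$, and turn each chosen arc $(\vecu,\vecw)$ into an edge by adjoining the opposite arc $(\vecw,\vecu)$. This defines $S^\ell_m(G)$; observe that no arc is ever deleted.

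Two of the three assertions are then immediate. Since $S^\ell_m(G)$ has the same vertex set as $S^\ell(G)$, it has $N_\ell(G)$ vertices by Proposition~\ref{prop:reg1}$(c)$. And since no arc was removed, every walk of $S^\ell(G)$ is still a walk of $S^\ell_m(G)$ between the same endpoints, so $\dist$ can only decrease and $\diam(S^\ell_m(G))\le\diam(S^\ell(G))$.

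For the degrees, note that no vertex of $V_1$ is affected, because both endpoints of every new edge lie in $V_2$; hence, by Proposition~\ref{prop:reg1}$(a)$, a $V_1$-vertex keeps undirected degree at most $2r$ (at most $2r-2$ if $\ell=1$) and directed out-degree at most $2z$. A vertex $\vecu\in V_2$ loses exactly $r'$ pure out-arcs, which become edges, so $d^+(\vecu)=d-r'$; and it acquires one edge for each of those $r'$ conversions, where it is the tail, plus one edge for each conversion in which it is the head. Thus $d(\vecu)=r'+t_{\vecu}$, where $t_{\vecu}$ denotes the number of conversions having $\vecu$ as head, and the claimed maxima $\max\{2r,2r'\}$ (resp.\ $\max\{2r-2,2r'\}$) and $\max\{d-r',2z\}$ will follow as soon as the conversions can be chosen so that $t_{\vecu}\le r'$ for all $\vecu\in V_2$.

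This balanced choice is the crux. Because each of the $|V_2|$ vertices is the tail of exactly $r'$ conversions and each vertex may be the head of at most $r'$, counting tails against heads forces $t_{\vecu}=r'$ for every $\vecu$; so what is needed is an $r'$-factor of the bipartite ``candidate'' graph $H$ on two copies of $V_2$, where $\vecu$ is joined to $\vecw$ whenever $\vecw$ is an out-neighbour of $\vecu$ in $L^\ell(G^*)$ lying in $V_2$. In $H$ every degree equals $z$ or $d=r+z$, and for $\ell\ge2$ the number of degree-$z$ vertices is $nzr^{\ell-1}$ on each side --- these being the $\ell$-walks whose only directed step is the first, resp.\ the last, step. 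I would extract the $r'$-factor from the degree-constrained subgraph (Gale--Ryser / max-flow) criterion applied to $H$, the point being to verify the resulting Hall-type inequality $\sum_{\vecw}\min\{r',|N_H(\vecw)\cap X|\}\ge r'|X|$ for every set $X$ of left vertices; this should follow from the near-regularity of $H$ together with the sparse, structured location of its degree-$z$ vertices inside the highly connected digraph $L^\ell(G^*)$, and establishing it is the main obstacle. An alternative, perhaps cleaner, route is to fix at each vertex of $G$ a bijection between its out-arcs and its in-arcs and use it to prescribe the $r'$ conversions canonically, reducing the balance to a direct count. In the case $\ell=1$ there is no difficulty at all: $H$ is then $z$-regular, hence decomposes into $z$ perfect matchings by K\"onig's theorem, and any $r'$ of them give the required factor. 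Once the $r'$-factor is in hand, the construction above delivers all three assertions of the statement.
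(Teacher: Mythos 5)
Your construction is the same as the paper's: choose an $r'$-in-, $r'$-out-regular set of arcs (an $r'$-factor) inside the subdigraph of $S^{\ell}(G)$ induced by $V_2$ and convert those arcs into edges. Your bookkeeping for the order, the diameter (no arc is removed, so distances cannot grow), and the degrees of $V_1$- and $V_2$-vertices is correct and agrees with the paper's, and your observation that the tail-count forces $t_{\vecu}=r'$ for all $\vecu$, so that what is needed is precisely an $r'$-factor, is exactly the reduction the paper makes. Your treatment of $\ell=1$ is complete and essentially identical to the paper's: there the induced subdigraph on $V_2$ is $z$-regular, and the paper invokes Hall's theorem where you invoke K\"onig's $1$-factorization; these are the same fact.

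The gap is exactly where you locate it: for $\ell>1$ you reduce everything to the existence of an $r'$-factor in the non-regular bipartite candidate graph $H$ and then stop, offering two unexecuted routes. The paper closes this step with one further claim: the induced subdigraph on $V_2$, though no longer regular, \emph{contains a $z$-regular spanning subdigraph}, because every $\vecu=u_0u_1\dots u_{\ell}\in V_2$ has the $z$ in-neighbours $vu_0\dots u_{\ell-1}$ with $(v,u_0)$ an arc of $G$ and the $z$ out-neighbours $u_1\dots u_{\ell}w$ with $(u_{\ell},w)$ an arc of $G$, all lying in $V_2$; granting that regular spanning subdigraph, the $r'$-factor for any $r'\le z$ follows as in case $(a)$. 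So, measured against the paper, what is missing from your write-up is this single existence claim. That said, your hesitation is not unfounded: the two arc sets the paper exhibits (in-arcs whose \emph{dropped} step is a $G$-arc; out-arcs whose \emph{appended} step is a $G$-arc) are different subsets of the arc set, and neither is by itself $z$-regular---the first has all in-degrees equal to $z$ but out-degrees $0$ or $d$, the second the reverse---so the paper's argument, read literally, only establishes minimum in- and out-degree at least $z$ in the induced subdigraph, which, as your Gale--Ryser discussion correctly notes, does not by itself guarantee an $r'$-factor. A fully rigorous proof still has to verify the deficiency/flow condition for $H$ or produce the regular subdigraph by an explicit pairing (your proposed bijection between in-arcs and out-arcs at each vertex of $G$ is a promising way to do this); you should carry one of these out rather than leave it as ``the main obstacle.''
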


\begin{proof}
Let $V_2$ be the set of vertices in $S^{\ell}(G)$ as in Proposition \ref{prop:reg1}, that is, corresponding to $\ell$-walks of $G$ containing some arc.

$(a)$   For $\ell=1$, any vertex $\vecv \in V_2$ has exactly $z$ in-neighbors and $z$ out-neighbors in $V_2$, so the induced subgraph of $S^1(G)$ generated by $V_2$ is a $z$-regular digraph. Then, if we turn all these arcs of $S^1(G)$ into edges, we obtain the mixed graph $S^1_m(G)$  having  the described properties for $r'=z$. When $r' \leq z-1$, by Hall's theorem (see, for example, Brualdi \cite{b10}), the induced subgraph of $S^1(G)$ generated by $V_2$ contains an $r'$-factor (that is, a spanning $r'$-regular digraph). Then, the claimed mixed graph $S^{1}_m(G)$ is obtained, as before, by changing all the arcs of this $r'$-factor by edges.


$(b)$ The induced subgraph of $S^{\ell}(G)$ generated by $V_2$ is no longer a regular digraph when $\ell >1$, but it contains a $z$-regular subdigraph. Indeed, every vertex  $u_0u_1\dots u_{\ell-1}u_{\ell}\in V_2$ is adjacent from (respectively, to) the $z$ vertices of the form $vu_0u_1\dots u_{\ell-1}$ (respectively, $u_1\dots u_{\ell-1}u_{\ell}w$) where $(v,u_0)$ (respectively, $(u_{\ell},w)$) is an arc of $G$. Now we reason as in $(a)$ to obtain the result.
\end{proof}

Proposition \ref{prop:imp} is useful when $z \leq r$. In this case, $S^{\ell}_m(G)$ preserves the maximum undirected degree of $S^{\ell}(G)$, meanwhile the maximum directed degree is reduced by $r'$, that is, from $d$ to $d-r'$. This means that the order of $S^{\ell}_m(G)$ has to be compared with the mixed Moore bound $M(2r,d-r',k')$ instead of $M(2r,d,k)$, for $k'\leq k$, which is an improvement. Going back to the example of the Bos\'ak graph $G$, now $S^1_m(G)$ has the same order, diameter and undirected degree than $S^1(G)$, but the directed out-degree becomes now $3$ instead of $4$, so as the mixed Moore bound reduces from $521$ to $M(4,3,3)=344$. Hence, we improve the ratio between the order of the mixed graph and the corresponding mixed Moore bound. Note that, in the above modification of the sequence graph of the Bos\'ak graph, we are not reducing the diameter. 
Thus, an interesting research line would be to give different modifications of $S^{\ell}(G)$ in order to get better
results in the degree/diameter problem. For instance, if one were able to lower the diameter of $S^{1}_m(G)$ from $3$ to $2$, just by
replacing some arcs by edges, or adding some arcs or edges, without modifying the maximum undirected and directed degrees, then $M(4,3,2)=53$ and, hence, the order of this
new mixed graph would be very close to the mixed Moore bound.


Note that the construction of case of Proposition \ref{prop:imp}$(b)$ can be also applied when $G$ is a digraph ($r=0$). Then, the resulting mixed graph has maximum undirected degree $2r'$ and maximum directed degree $d-r'$. For instance, we know that, if $G=K_{d+1}^*$ (the complete symmetric digraph on $d+1$ vertices), $S^{\ell}(G)=L^{\ell}(G)$ is the Kautz digraph $K(d,\ell)$. Then, for $1\le r'\le d-1$, we obtain the mixed graph $S^{\ell}_m(G)$ with number of vertices $d^{\ell}+d^{\ell-1}$, diameter $\ell$, and the above maximum degrees $2r'$ and $d-r'$. Then, the ratio
$
M(2r',d-r',\ell)/|S^{\ell}_m(G)|
$
turns out to be of the order of $(1+r'/d)^{\ell}$, which is asymptotically optimal for a fixed $r'$ and large $d$.

\vskip.5cm

\noindent{\bf Acknowledgments.}  This research is supported by the
{\em Ministerio de Ciencia e Innovaci\'on}, and the {\em European Regional Development Fund} under project MTM2014-60127-P (C. D. and M. A. F.), the {\em Catalan Research Council} under project 2014SGR1147 (C. D. and M. A. F.). The author N. L. has been supported in part by grant MTM2013-46949-P, from {\em Ministerio de Econom\'{\i}a y Competitividad}, Spain.

%

\end{document}